\newtheorem{definition}{Definition}
\newtheorem{proof}{Proof}
\newtheorem{theorem}{Theorem}
\begin{document}

\title{$\ell_{1}$-Norm Minimization with Regula Falsi Type Root Finding Methods 
}

\author{Metin Vural, Aleksandr Y. Aravkin, and S\l{}awomir~Sta\'nczak
}


\maketitle

\begin{abstract}
Sparse level-set formulations allow practitioners to find the minimum 1-norm solution subject to likelihood  constraints. Prior art requires this constraint to be convex. In this letter, we develop an efficient approach for nonconvex likelihoods, using Regula Falsi root-finding techniques to solve the level-set formulation.
Regula Falsi methods are simple,  derivative-free, 
and efficient, and the approach provably extends level-set methods to the broader class of nonconvex inverse problems. 
Practical performance is illustrated using $\ell_1$-regularized Student's t inversion, which is a nonconvex approach used to develop outlier-robust formulations. 
\end{abstract}

\begin{IEEEkeywords}
$\ell_{1}$-norm minimization, nonconvex models, Regula-Falsi, root-finding
\end{IEEEkeywords}

\IEEEpeerreviewmaketitle

\vspace{-10pt}
\section{Introduction}

\IEEEPARstart{S}{parse} 
recovery using $\ell_{1}$-norm minimization plays a major role in many signal processing applications. 
Denoting $\textbf{y}\in \mathbb{R}^{M}$ as a measurement vector, $\mbox{D} \in \mathbb{R}^{M\times N}$ as an overcomplete matrix with $M<N$, and $\rho$ as the penalty that measures the data misfit, the `noise-aware' level-set problem is to minimize $\ell_{1}$-norm subject to a misfit or likelihood constraint: 
\begin{equation*} 
(\mbox{P}_{\sigma}) \ \ \ \ \	\underset{\textbf{x}\in \mathbb{R}^{N}}{\mbox{minimize}} \ \ \left\|\textbf{x}\right\|_1 \ \ \mbox{s.t.} \ \ \rho(\textbf{y}-\mbox{D}\textbf{x})\leq \sigma, \ \
\label{eq:optimization problem1}
\end{equation*}	
where $\sigma$ indicates the noise level. 
$\mbox{P}_{\sigma}$ is used in many applications, including  compressed sensing \cite{CS1} \cite{CS2}, overcomplete signal representation \cite{OSR1}, \cite{OSR2}, coding theory \cite{Coding1}, and image processing \cite{Image1}. 

An efficient way to solve $(\mbox{P}_{\sigma})$
is to develop an explicit relationship with a 
simpler problem that can be directly solved with primal-only methods, such as the prox-gradient algorithm~\cite{aravkin2019level}:
\begin{equation*}
(\mbox{P}_{\tau}) \ \ \ \ \ \underset{\textbf{x}\in \mathbb{R}^{N}}{\mbox{minimize}} \ \ \ \rho(\textbf{y}-\mbox{D}\textbf{x}) \ \ \mbox{s.t.} \ \ \left\|\textbf{x}\right\|_1\leq\tau. \
\label{eq:optimization problem3}
\end{equation*}
Exploiting the relationship between $\mbox{P}_{\sigma}$ and $\mbox{P}_{\tau}$  allows one to specify the noise tolerance $\sigma$, and then find the solution by inexactly optimizing a sequence of simpler $(\mbox{P}_{\tau})$ problems.

It has been known for a long time that $(\mbox{P}_{\tau})$ and $(\mbox{P}_{\sigma})$ can provide equivalent solutions \cite{Foucart}, and the idea of solving $(\mbox{P}_{\tau})$ to obtain the solution of $(\mbox{P}_{\sigma})$ was first proposed by \cite{Berg,Berg2}. Their idea follows the optimality trade-off between  the minimum $\ell_{1}$-norm and the {least squares} data misfit, which generates a differentiable convex Pareto frontier. This optimality tracing is formulated as a non-linear equation root finding problem, i.e. getting the exact $\tau$ for a given noise tolerance $\sigma$, and is solved by an inexact Newton Method. The resulting {\it level-set} approach has been generalized to other instances of convex programming by~\cite{aravkin2013variational,aravkin2019level}.

In the most general case, the relationship between
 $(\mbox{P}_{\tau})$ and $(\mbox{P}_{\sigma})$ does not require convexity~\cite[Theorem 2.1]{aravkin2013variational}. However, practical implementations of the root-finding approach require convexity of the Pareto frontier to guarantee success of the root finding procedure, limiting the approach to the convex case. Current implementations favor Newton's method, which requires derivatives. To address this issue, an extension using an inexact secant method has also been developed~\cite{aravkin2019level}.
 

In this paper, we introduce \textit{Regula Falsi} type derivative-free non-linear equation root finding schemes to solve $(\mbox{P}_{\sigma})$. They are bracketing type methods that offer convergence guarantee for convex and nonconvex models with the proper choices of root searching interval: two initial points with the opposite signs  assures convergence \cite{Numerical}. \textit{Regula Falsi} type methods do not require convexity to trace the root, allowing nonconvex loss functions in the $(\mbox{P}_{\sigma})$ formulations. Finally, these methods
are also derivative free. All of these advantages allow 
\textit{Regula Falsi} type methods to be applied to cases where Newton, secant, and their variants are not guaranteed to converge. 


Moving outside of the convex class opens the way for using many useful nonconvex models in $(\mbox{P}_{\sigma})$ formulations. For example, in \cite{Stadler2010} and \cite{Buhlmann2011non} consider mixture models whose negative log-likelihood are nonconvex, with applications to
high-dimensional inhomogeneus data where number of covariates could be larger than sample size.  A second application area uses nonconvex Student's t likelihoods to develop outlier-robust approaches \cite{AravkinStudent1,AravkinStudent2, AravkinStudent3}. 
In this paper, we show how \textit{Regula Falsi} type root finding methods can be used with the nonconvex {Student's t} loss, as well 
the convex {least-squares} and {Huber} losses.


This paper is organized as follows. In Section \ref{sec:Pareto}, a Pareto frontier that reveals the relation between $(\mbox{P}_{\tau})$ and $(\mbox{P}_{\sigma})$ is defined and \textit{Regula Falsi} type methods are introduced. Section \ref{sec:Solving Psigma} presents the proposed $(\mbox{P}_{\sigma})$ solver while the simulation results are discussed in Section \ref{sec:Simulations}.

\vspace{-10pt}
\section{Pareto Frontier and Regula Falsi-Type Root Finding Methods}
\label{sec:Pareto}

Under simple `active constraint' conditions, problems $(\mbox{P}_{\tau})$ and $(\mbox{P}_{\sigma})$
are equivalent for some pair $(\tau, \sigma)$~\cite{aravkin2013variational}.
Pareto frontier approaches use root finding and 
inexact solutions to a sequence of 
$(\mbox{P}_{\tau})$  to solve $(\mbox{P}_{\sigma})$.

\vspace{-10pt}
\subsection{Pareto Optimality}

\begin{definition}
	\label{def:pareto}
	i) \textit{Pareto optimal} is the minimal achievable feasible point of a feasible set. 
	ii) The set that comprised of Pareto optimal points is called the \textit{Pareto frontier}.
\end{definition}

In this work, we also seek to solve $(\mbox{P}_{\sigma})$ by working with $(\mbox{P}_{\tau})$. Specifically, we are interested in the
optimal objective value of the $(\mbox{P}_{\tau})$ for a given $\textbf{y}$ and $\tau$ which can be expressed with  following 
\begin{equation}
\nu(\tau):= \underset{
	\textbf{x}
}{\inf}\{ \rho(\mbox{D}\textbf{x} - \textbf{y})  |  \ \left\|\textbf{x}\right\|_1 \leq\tau\},
\label{eq:nu}
\end{equation}
and the corresponding \textit{Pareto frontier} can be defined as
\begin{equation}
\psi(\tau) := \nu(\tau) - \sigma.
\label{eq:psi}
\end{equation}

%
%
%


\begin{theorem}
	\label{theo:1}
	i) 	If $\rho$ is a convex function (e.g. $\ell_{2}$-norm, Huber function), then so is $\psi$.
	ii) If $\rho$ is a nonconvex   function, convexity of $\psi$ does not follow. When $\rho$ is quasi-convex function, then so is $\psi$.
\end{theorem}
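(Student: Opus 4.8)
The plan is to observe first that $\psi(\tau)=\nu(\tau)-\sigma$ differs from the value function $\nu$ of \eqref{eq:nu} only by the additive constant $\sigma$, so convexity (resp.\ quasi-convexity) of $\psi$ is equivalent to the same property of $\nu$; I would therefore prove everything for $\nu$. The single mechanism behind both parts is a \emph{feasible convex-combination} argument: the $\ell_1$ ball is convex, so averaging feasible points for two radii produces a feasible point for the averaged radius, and the qualitative behaviour of $\nu$ is then dictated solely by how $\rho$ acts along the segment joining the two residuals.

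For part (i), I would fix $\tau_1,\tau_2\ge0$ and $\lambda\in[0,1]$, set $\tau=\lambda\tau_1+(1-\lambda)\tau_2$, and for $\varepsilon>0$ pick $\textbf{x}_1,\textbf{x}_2$ feasible for $\tau_1,\tau_2$ with $\rho(\mbox{D}\textbf{x}_i-\textbf{y})\le\nu(\tau_i)+\varepsilon$. Putting $\textbf{x}=\lambda\textbf{x}_1+(1-\lambda)\textbf{x}_2$, subadditivity and homogeneity of the norm give $\|\textbf{x}\|_1\le\lambda\|\textbf{x}_1\|_1+(1-\lambda)\|\textbf{x}_2\|_1\le\tau$, so $\textbf{x}$ is feasible for $\tau$. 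Because $\mbox{D}\textbf{x}-\textbf{y}=\lambda(\mbox{D}\textbf{x}_1-\textbf{y})+(1-\lambda)(\mbox{D}\textbf{x}_2-\textbf{y})$, convexity of $\rho$ gives $\rho(\mbox{D}\textbf{x}-\textbf{y})\le\lambda\rho(\mbox{D}\textbf{x}_1-\textbf{y})+(1-\lambda)\rho(\mbox{D}\textbf{x}_2-\textbf{y})$; taking the infimum over feasible points on the left and sending $\varepsilon\to0$ yields $\nu(\tau)\le\lambda\nu(\tau_1)+(1-\lambda)\nu(\tau_2)$. Equivalently, I could note that $\rho(\mbox{D}\textbf{x}-\textbf{y})$ together with the indicator of the convex set $\{(\textbf{x},\tau):\|\textbf{x}\|_1\le\tau\}$ is jointly convex, and that partial minimization preserves convexity.

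For part (ii), I would first exhibit a scalar counterexample ($M=N=1$, $\mbox{D}=1$) with the nonconvex Student's t loss $\rho(r)=\log(1+r^2)$: a direct computation gives $\nu(\tau)=\log(1+(y-\tau)^2)$ for $0\le\tau\le y$ (and $\nu(\tau)=0$ afterwards), which is concave in $\tau$ wherever $|y-\tau|>1$, so for a scalar measurement $y>1$ the frontier $\nu$ is not convex. For the positive claim I would reuse the construction of the previous paragraph verbatim --- feasibility of $\textbf{x}=\lambda\textbf{x}_1+(1-\lambda)\textbf{x}_2$ used \emph{only} convexity of the $\ell_1$ ball, which still holds --- but replace the convexity bound by the quasi-convexity inequality $\rho(\lambda r_1+(1-\lambda)r_2)\le\max\{\rho(r_1),\rho(r_2)\}$ with $r_i=\mbox{D}\textbf{x}_i-\textbf{y}$. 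This gives $\nu(\tau)\le\max\{\nu(\tau_1),\nu(\tau_2)\}+\varepsilon$, and $\varepsilon\to0$ delivers $\nu(\tau)\le\max\{\nu(\tau_1),\nu(\tau_2)\}$, i.e.\ quasi-convexity.

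The step needing the most care is the quasi-convex case. It is tempting to reduce it to a generic ``partial minimization of a jointly quasi-convex function is quasi-convex'' principle, but the relevant joint function --- the loss plus the indicator of the $\ell_1$ constraint --- is \emph{not} jointly quasi-convex when $\rho$ is merely quasi-convex, since sums of quasi-convex functions need not be quasi-convex. The direct argument avoids forming that joint function, exploiting the clean decoupling between feasibility (convexity of the $\ell_1$ ball) and the objective bound (quasi-convexity of $\rho$ along the residual segment); I would state this decoupling explicitly. I would also treat possible nonattainment of the infimum in \eqref{eq:nu} through the $\varepsilon$-minimizer device above, since $\rho$ need not be coercive.
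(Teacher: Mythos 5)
Your proof is correct and follows essentially the same route as the paper: average feasible points for two radii (feasibility following from convexity of the $\ell_1$-ball), then bound $\rho$ along the residual segment using convexity or quasi-convexity. The only differences are refinements the paper omits --- you handle possible nonattainment of the infimum via $\varepsilon$-minimizers where the paper tacitly assumes solutions of $(\mbox{P}_{\tau})$ exist, and you supply an explicit scalar counterexample for the claim that convexity of $\psi$ can fail, which the paper asserts without proof.
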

\begin{proof}
	Let us consider any two solutions $\textbf{x}_{1}$ and $\textbf{x}_{2}$ of $(\mbox{P}_{\tau})$ for any  $\tau_{1}$ and $\tau_{2}$ respectively. Since $\ell_{1}$-norm is convex, for any $\beta\in [0,1]$ following holds 
	\begin{equation}
	\begin{split}
	\left\|\beta \textbf{x}_{1} +  (1-\beta)\textbf{x}_{2}\right\|_{1} \leq \beta\left\| \textbf{x}_{1}\right\|_{1}  + 
	(1-\beta)\left\| \textbf{x}_{2}\right\|_{1}  \\ = 
	\beta\tau_{1}  + 
	(1-\beta)\tau_{2}. \ \ \ \ \ \ \ \ \
	\end{split}
	\label{eq:conv 1}
	\end{equation}
	An immediate outcome of eq.~\eqref{eq:conv 1} is that  $\beta \textbf{x}_{1} + (1-\beta)\textbf{x}_{2}$ is a feasible point of $(\mbox{P}_{\tau})$ with $\tau = \beta\tau_{1}  + 
	(1-\beta)\tau_{2}$. Thus we can write the following inequality
	\begin{equation}
		\begin{split}
		\nu(\beta\tau_{1}  + 
		(1-\beta)\tau_{2} ) \leq \rho(\mbox{D}(\beta\textbf{x}_{1} + (1-\beta)\textbf{x}_{2}) - \textbf{y}) \ \ \ \ \ \\
		=  \rho(\beta(\mbox{D}\textbf{x}_{1} - \textbf{y})+ (1-\beta)(\mbox{D}\textbf{x}_{2} - \textbf{y}) ).     
		\end{split}
		\label{eq:conv 2}
	\end{equation}
i) If $\rho$ is convex, then
	\begin{equation}
	\begin{split}
 \rho(\beta(\mbox{D}\textbf{x}_{1} - \textbf{y})+ (1-\beta)(\mbox{D}\textbf{x}_{2} - \textbf{y}) )     
	\leq \beta \rho(\mbox{D}\textbf{x}_{1} - \textbf{y}) + \\ (1 - \beta)\rho(\mbox{D}\textbf{x}_{2} - \textbf{y})) 
	= \beta \nu(\tau_{1}) + (1-\beta)\nu(\tau_{2}),  
	\end{split}
\end{equation}
	that shows $\nu$ is convex as well as $\psi$.\\	
	ii) If $\rho$ is quasi-convex, then
	\begin{equation}
	\begin{split}
	\rho(\beta(\mbox{D}\textbf{x}_{1} - \textbf{y})+ (1-\beta)(\mbox{D}\textbf{x}_{2} - \textbf{y}) )     
	\leq \ \ \ \ \ \ \ \ \ \ \ \ \ \ \ \ \ \ \ \\ \max \{\rho(\mbox{D}\textbf{x}_{1} - \textbf{y}),\rho(\mbox{D}\textbf{x}_{2} - \textbf{y})\}	= \max \{\nu(\tau_{1}), \nu(\tau_{2})\}, 
	\end{split}
\end{equation}
that shows $\nu$ is quasi-convex as well as $\psi$.
\end{proof}

Pareto optimal points are unique for $(\mbox{P}_{\tau})$ with convex and quasi-convex losses $\rho$ that can be infered from \cite[Theorem~1.1, Theorem~1.2]{pardalos2017non}, \cite{miettinen2001some}.
Also, the feasible set of $(\mbox{P}_{\tau})$ enlarges as $\tau$ increases, thus $\psi(\tau)$ is nonincreasing \cite{Berg3}. 
In Fig. \ref{fig:pareto 1} an abstract $\psi(\tau)$ is depicted for convex and quasi-convex losses $\rho$ where the red line represents the $\sigma$ level. 

Obtaining the solution of  $(\mbox{P}_{\sigma})$ by solving $(\mbox{P}_{\tau})$ proceeds as follows.  We start with a ${\tau}$ parameter to solve $(\mbox{P}_{\tau})$, and using the solution of $(\mbox{P}_{\tau})$, find a new $\tau$ value. We proceed iteratively until 
$\psi(\tau_\sigma)\to 0$.  
$\tau_\sigma$  occurs at the intersection of the red line and black curve in Fig \ref{fig:pareto 1}, where we immediately see that the solution of  $(\mbox{P}_{\tau})$ is also a solution of the  $(\mbox{P}_{\sigma})$, a fact proven formally by~\cite{aravkin2013variational}.  
Finding $\tau_\sigma$ can be formulated as a nonlinear root finding problem.

\begin{figure} [h]
	\centering
	\begin{subfigure}[b]{0.22\textwidth}
		\includegraphics[width=1\textwidth]{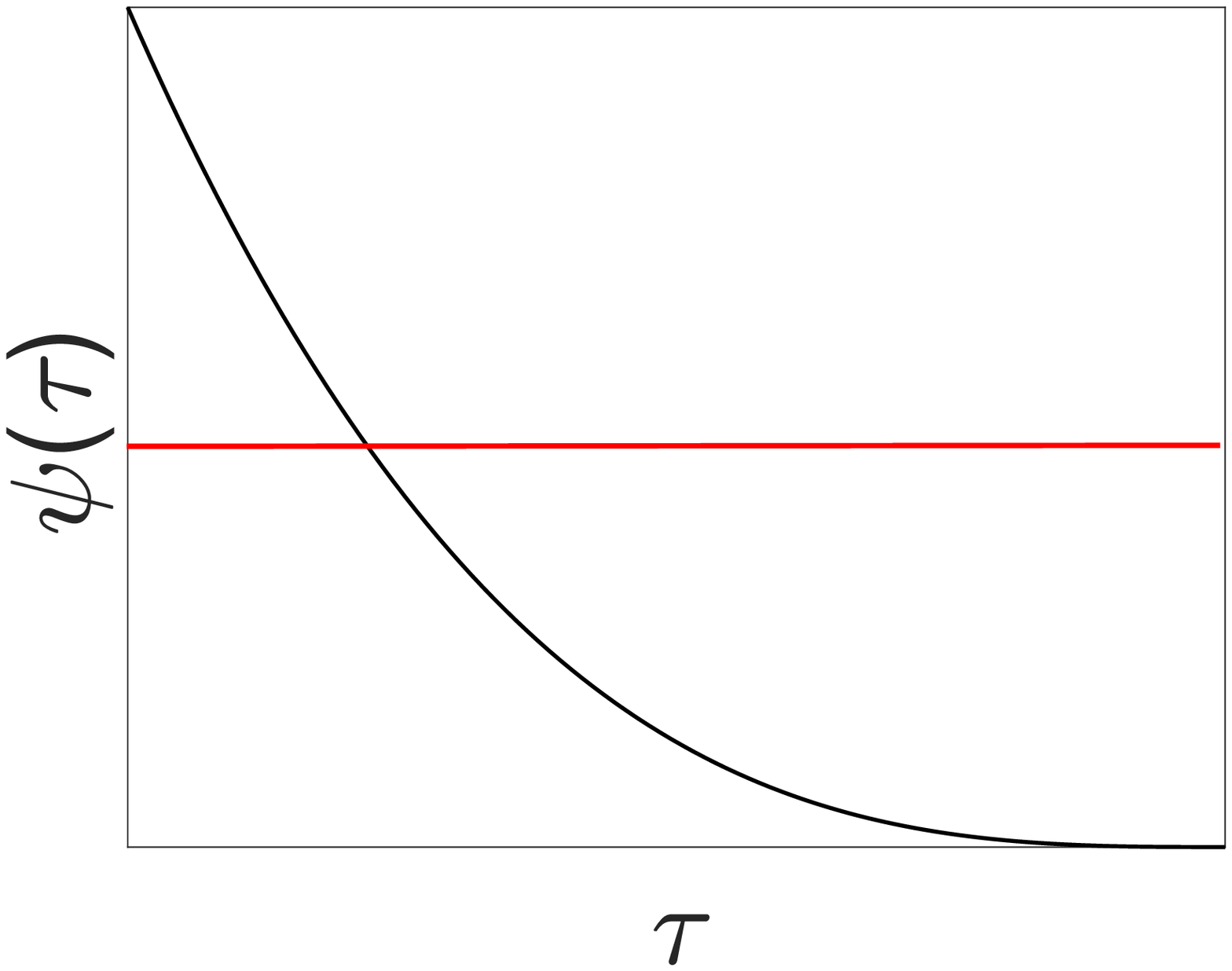}
		\caption{Convex $\rho$}
		\label{fig:a}
	\end{subfigure}
	\begin{subfigure}[b]{0.22\textwidth}
		\includegraphics[width=1\textwidth]{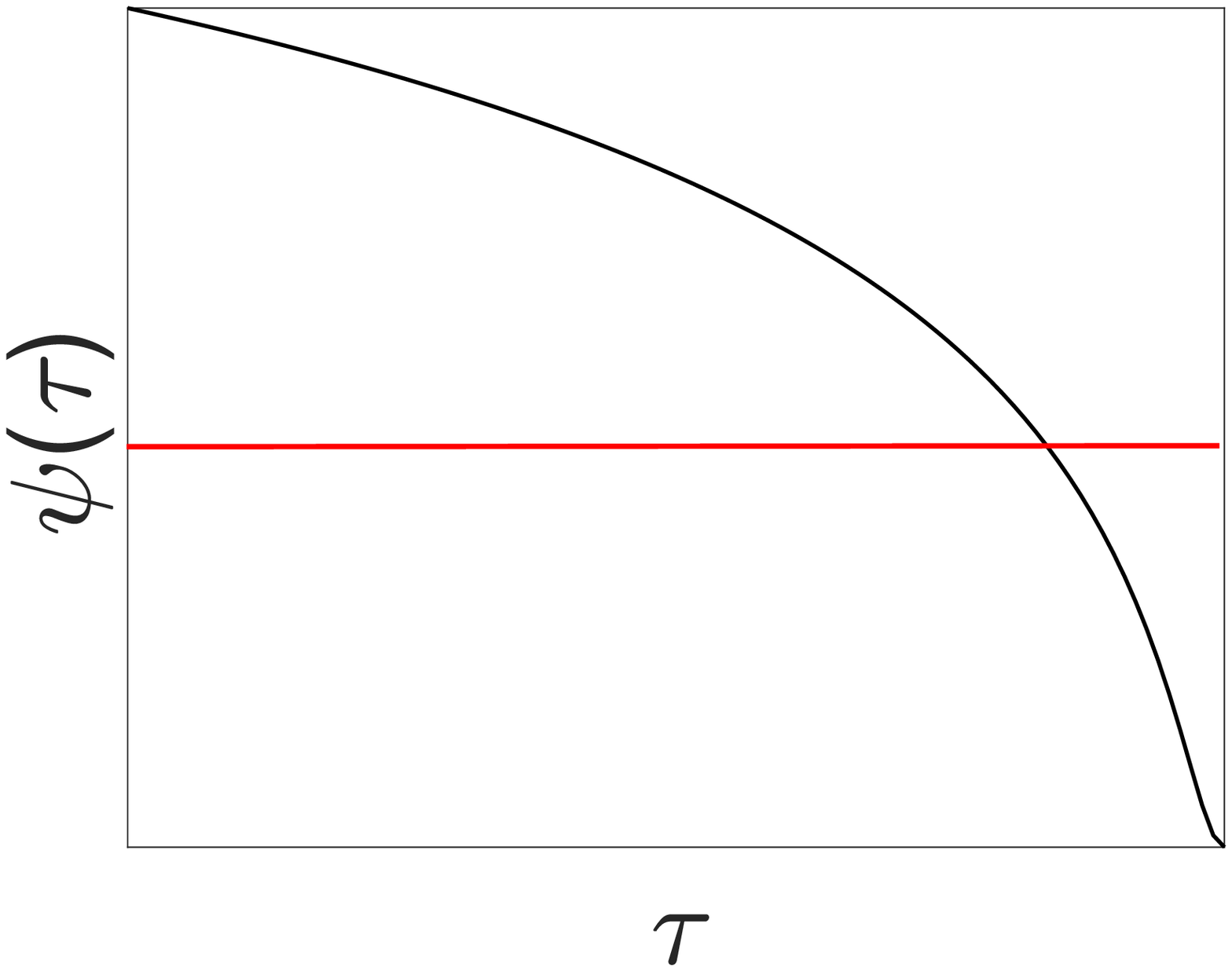}
		\caption{Quasi-convex $\rho$}
		\label{fig:b}
	\end{subfigure} 
	\caption{Pareto frontiers for convex and quasi-convex losses $\rho$.}
	\label{fig:pareto 1}
\end{figure}

\vspace{-10pt}
\subsection{Regula Falsi Type Methods}

Our aim is to 
\begin{equation}
\mbox{find} \ \ \ \tau \ \ \ \mbox{such that} \ \ \ \psi(\tau) = 0.
\label{eq:root 3}
\end{equation}
%
If $\rho$ is nonconvex, neither Newton's method nor secant variants are guaranteed to solve~\eqref{eq:root 3}. 
In particular, the tangent lines may cross in the feasible region, and secant lines may not bracket the feasible area. 
In contrast, regardless of shape of the $\rho$, bracketing type root finding methods are guaranteed to solve~\eqref{eq:root 3}. Here, we develop \textit{Regula Falsi} type methods for~\eqref{eq:root 3}.

We denote the solution of a nonlinear equation of $f$ by $x^{*}$, i.e $f(x^{*}) = 0$. 
With this notation, \textit{Regula Falsi} type methods starting with the points $a$ and $b$ proceed as follows. 

\begin{enumerate}
	\item Calculate the secant line between $a$ and $b$,
	\begin{equation}
	s_{ab} = \frac{f(b) - f(a)}{b - a},
	\label{eq:secant line}
	\end{equation}
	and find the point where (\ref{eq:secant line}) intersects the \textit{x}-axis, which is
$c = b - \frac{f(b)}{s_{ab}}$.
	\item Calculate $f(c)$. If $f(c) = 0$ then $x^{*} = c$, otherwise continue.
	\item Adjust the new interval: if $f(c)f(b)<0$, $x^{*}$ should be in between $b$ and $c$. Set
	\begin{equation}
	a = b, \ b = c, \ \ \text{and} \ \ f(a) = f(b), \ f(b) = f(c),
	\label{eq:assign 1}
	\end{equation}	
	if $f(c)f(b)>0$, $x^{*}$ should be in between $a$ and $c$.  
	\begin{equation}
	b = c, \ \ \text{and} \ \ f(a) =  \mu f(a), \ f(b) = f(c),
	\label{eq:assign 2}
	\end{equation}
	where $\mu$ is the scaling factor.
	\item Check the ending condition: if $|b - a| \leq \epsilon$, stop the iteration. Take 
	\begin{equation}
	x^{*} = 
	\begin{cases}
	b, \ \ \mbox{if} \ \   |f(b)| \leq |f(a) \\
	a,  \ \ \mbox{if} \ \  |f(b)| > |f(a)
	\end{cases},
	\label{eq:assign 3}
	\end{equation}
	if $|b - a| > \epsilon$, continue the iteration, go back to $1)$ with the values $a, b$ and $f(a), f(b)$ from $3)$.
\end{enumerate}

\begin{table}
			\scriptsize
	\caption{\textit{Regula Falsi-type} methods with different $\mu$ values.} 
	\centering
	\begin{tabular}{c | c }
		\hline\hline
		Method & $\mu$ \\ \hline
		Regula Falsi  &  $1$ \\ \hline	
		Illinois  &  $0.5$ \\ \hline	
		Pegasus   &  $\frac{f(b)}{f(b) + f(c)}$ \\ \hline	
		Anderson-Björck   
		& $ 1 - \frac{f(c)}{f(b)}$, and in case $ 1 \leq \frac{f(c)}{f(b)} $  set $\gamma = 0.5$.\\ \hline		
	\end{tabular}
	\label{tab:methods with gamma}
\end{table}

\textit{Regula Falsi} type methods differ from each other in the choice of the scaling factor $\mu$. Several commonly considered $\mu$ in the literature is summarized in Table \ref{tab:methods with gamma}. Additional options for $\mu$ are studied in \cite{Galdino2011family}, \cite{FordImprovedAO}.

\section{Solving $(\mbox{P}_{\sigma})$}
\label{sec:Solving Psigma}

\subsection{$(\mbox{P}_{\tau})$ Solver}

In order to solve $(\mbox{P}_{\sigma})$, we repeatedly solve $(\mbox{P}_{\tau})$. $(\mbox{P}_{\tau})$
can be solved using the simple projected gradient method
\begin{equation}
\textbf{x}^{(k)}  = \text{proj}_{\tau\mathbb{B}_1} \left(\textbf{x}^{(k-1)}+\gamma \mbox{D}^T\nabla \rho(\textbf{y}-\mbox{D}\textbf{x}^{(k-1)})\right).
\label{eq:projection3}
\end{equation}

%
%


\subsubsection{Projection onto the $\ell_{1}$-ball}
\label{sec:proj}
Projection of a vector $\textbf{a} = [ a_1,a_2,...,a_N ]$ onto the $\ell_1$-ball can be written as following 
\begin{equation}
\text{proj} \left(\textbf{a},\tau \right) =  \left\{
\begin{array}{c l}      
\textbf{a}, \ & \text{if} \ \ \left\|\textbf{a}\right\|_1\leq 1 \\
\mbox{sgn}(a_i)\mbox{max}\{ |a_i|-\kappa,0\}, \ & \ \ \ \ \text{else} 
\end{array}\right.
\label{eq:projection l1}
\end{equation}
where $\kappa$ is the Lagrangian multiplier of $\text{proj}_{1}\left(\textbf{a},\tau \right)$ \cite{Duchi}.  
The tricky part of the projection is to find the $\kappa$ that satisifes Karush-Kuhn-Tucker optimality condition $\sum_{i=1}^{N} \left(|a_i|-\kappa\right) = \tau$ in an efficient way. 

To find $\kappa$, we utilized the simple, sorting based approach introduced in \cite{ref:sorting}. Additional variations of this method are described in \cite{Condat}.

\textit{To find $\kappa$:}
\begin{itemize}
	\item Sort $|\textbf{a}|$ as: $c_1\geq c_2 \geq ... \geq c_N$ ,
	\item Find $K = \underset{1\leq k \leq N}{\mbox{max}}\left\{k \ | \left(\sum_{j=1}^{k}c_j - \tau \right)/k\leq c_k\right\}$,
	\item Calculate $\kappa = \left(\sum_{k=1}^{K}c_k - \tau \right)/K$.
\end{itemize}


To solve $(\mbox{P}_{\tau})$, we used a projected gradient method with the spectral line search strategy discussed by  \cite{Berg}, with the projection steps given in \ref{sec:proj}.

\vspace{-15pt}
\subsection{Solving $(\mbox{P}_{\sigma})$}

\subsubsection{Bracketing}
In order to choose the root searching interval, we consider a well-known decomposition 
method called \textit{method of frames (mof)}~\cite{ref:mof}. The \textit{mof} decomposition of a signal $\textbf{y}$ can be obtained with the inverse linear mapping such that
$\textbf{x}_{MF} = \mbox{D}^T(\mbox{D}\mbox{D}^T)^{-1} \textbf{y} = 
\arg \min
\left\{\left\|\textbf{x}\right\|_{2} \ \ \mbox{s.t.} \ \ \textbf{y}=\mbox{D}\textbf{x}\right\}$.
Many common loss functions $\rho$ are nonnegative and vanish at the origin, including gauges and nonconvex losses considered in this study. For these losses, under the assumption that $\mbox{D}$ is full row-rank, $\rho(\textbf{y}-\mbox{D}\textbf{x}_{MF}) = 0$ and $\psi(\tau_{MF})= - \sigma$ with $\tau_{MF} = \left\|\textbf{x}_{MF}\right\|_1$. For the left endpoint, we consider $\textbf{x} = 0$, and $\tau = 0$, with loss equal to $\rho(\textbf{y})$. 
Bracketing the root searching interval between the points $\textbf{x}_{MF}$ and $0$ ensures finding a solution for~\eqref{eq:root 3} since they provide two initial points with opposite signs for $\psi$, as long as $\rho(\textbf{y}) > \sigma$. 


\subsubsection{Solving $(\mbox{P}_{\sigma})$}
We combine the proposed \textit{Regula Falsi} methods and a $(\mbox{P}_{\tau})$ solver to solve  $(\mbox{P}_{\sigma})$ as follows:
\begin{itemize}
	\item Choose initial $\tau$ values. 
	
	Choose two initial values with opposite signs to ensure  convergence of \textit{Regula Falsi} type Methods. The default choice is given by $\tau = 0$ and $\tau = \tau_{MF}$.
	\item Apply the steps of \textit{Regula Falsi} type methods.
	
	Every iteration of the \textit{Regula Falsi-type methods} requires solving $(\mbox{P}_{\tau})$, except at $\tau = 0$ and $\tau = \tau_{MF}$. 
	\item Terminate once the stopping criteria are met.
\end{itemize}

\vspace{-15pt}
\section{Simulations}
\label{sec:Simulations}


In order to examine the performances of \textit{Regula Falsi} type methods given in Table \ref{tab:methods with gamma}, we created a test environment and benchmark by using Sparco framework \cite{sparco}. Real-valued problems are chosen from  Sparco for the simulations among this collection of test problems that includes many examples from the literature.
Details about the problems and related  publications can be found in \cite{sparco}. 

\begin{figure*} [t]
	\centering
	\begin{subfigure}[b]{0.27\textwidth}
		\includegraphics[width=1\textwidth]{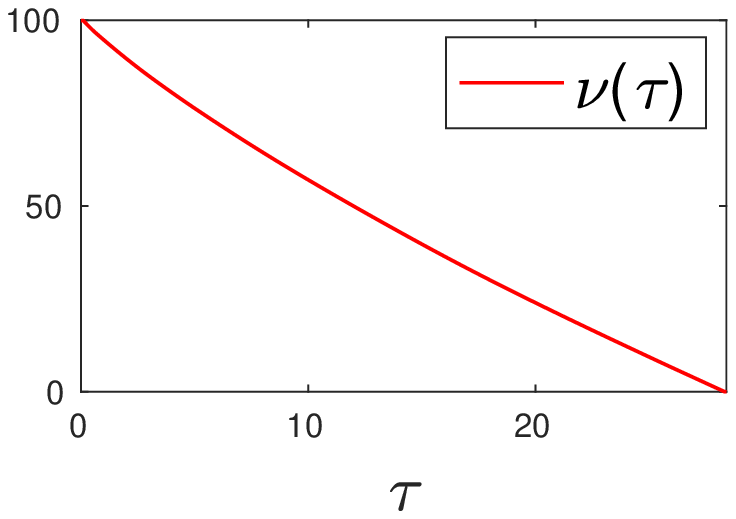}
		\label{fig:aa}
	\end{subfigure}
	\begin{subfigure}[b]{0.27\textwidth}
		\includegraphics[width=1\textwidth]{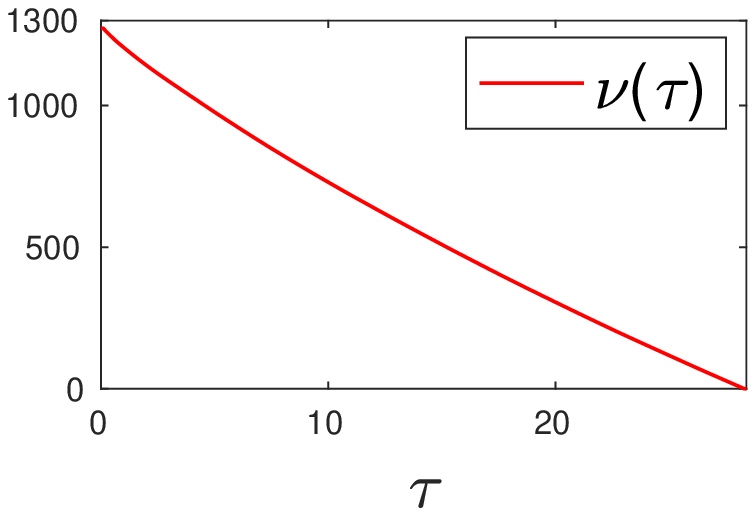}
		\label{fig:bb}
	\end{subfigure} 
		\begin{subfigure}[b]{0.28\textwidth}
		\includegraphics[width=1\textwidth]{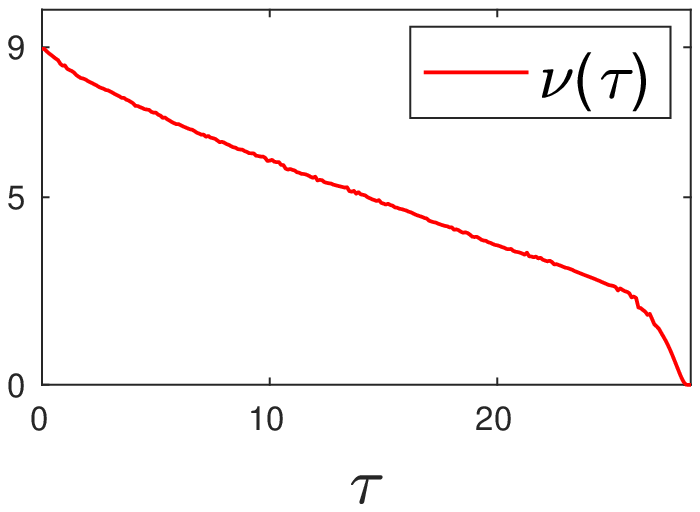}
		\label{fig:cc}
	\end{subfigure} 
	\vspace{-15pt}
	\caption{From left to right, $\nu(\tau)$ for the \textit{gauss-en} problem with $\rho_l$, $\rho_h$ and $\rho_s$.}
	\label{fig:pareto gauss en}
\end{figure*}

Performances of the \textit{Regula Falsi} type methods are investigated for three different loss functions $\rho$ 
which are Least squares $\rho_l(\textbf{x}) = \left\|\textbf{x}\right\|_2$, Huber
\begin{align}
\rho_{h}(\textbf{x})_i = 
\sum_{i=1}^N 
\begin{cases}
\ \ \ \frac{x_i^2}{2 \delta}, \ \ \ \ \ \mbox{if} \ \   |x_i| \leq \delta \\
|x_i|-\frac{\delta}{2}, \ \ \ \mbox{otherwise} 
\end{cases},
\end{align}
and Student's t
$\rho_s(\textbf{x}) = \sum_{i=1}^{N} \nu \log \left(1 + x_i^2/\nu\right)$
where $\delta$ and $\nu$ are the tunning parameters for $\rho_h$ and $\rho_s$, respectively; we take $\delta = 5\times 10^{-3}$ and $\nu = 10^{-2}$ in our simulations.
$N,M$ and $\rho(\textbf{y})$ values for the chosen problems are given in Table \ref{tab:l1 simulations n-m}. 

We solve $(\mbox{P}_{\sigma})$ for a range of $\sigma$ values, chosen relative to $\rho(\mathbf{y})$, in particular  $5\times10^{-1}\rho(\mathbf{y})$,  $5\times10^{-2} \rho(\mathbf{y})$ and $5\times10^{-3} \rho(\mathbf{y})$.
We compute residuals $ \rho(\textbf{r}_{\sigma}) = \rho (\textbf{y}-\mbox{D}\textbf{x}_{\sigma})$, norms $\left\|\textbf{x}_{\sigma}\right\|_1$, the number of nonzero (\textit{nnz}) for each solution  $\textbf{x}_{\sigma}$, and \textit{iter} the total solves of  $(\mbox{P}_{\tau})$ to reach these solutions; results are displayed in Table \ref{tab:l1 simulations}.

Newton's method requires fewer $(\mbox{P}_{\tau})$ solves (see Table \ref{tab:l1 simulations}). However, solving \eqref{eq:root 3} is more expensive for Newton's method than for \textit{Regula Falsi}-type methods since the derivative calculation of the nonlinear equation is required  along with the function evaluation,  while \textit{Regula Falsi}-type methods need only the function evaluation. From a robustness standpoint,  Newton, secant, and their variants are not guaranteed to converge for nonconvex loss functions, in contrast to \textit{Regula Falsi}-type methods. 

The Pareto frontier $\nu(\tau)$ is shown for the \textit{gauss-en} problem with losses $\rho_l$, $\rho_h$ and $\rho_s$ in Figure \ref{fig:pareto gauss en}. We expect similar patterns in many problems. As expected, the Pareto frontier is nonconvex, and  Newton iterations will not stay in the feasible area for $\rho_s$  in this example, since the tangent lines can leave the feasible region.

Figure \ref{fig:cs} shows a typical compressed sensing example. A $20$-sparse vector is recovered using a normally distributed Parseval frame $\mbox{D} \in \mathbb{R}^{175\times 600}$. A measurement is generated according to 
	$\textbf{y} = \mbox{D}\textbf{x} + \textbf{w} + \zeta$,
where the noise $\textbf{w}$ is zero mean normal error with the variance of $0.005$ and $\zeta$ has five randomly placed outliers with a zero mean normal distribution variance of $4$. $(\mbox{P}_{\sigma})$ solved with the $\sigma = \rho(\zeta)$ for a fair comparison of $\rho_l$, $\rho_h$ and $\rho_s$. Huber loss is less sensitive to outliers  in measurement data than least-squares, but Student's t outperforms Huber loss since it grows sublinearly as outliers increase, a property also noted by \cite{aravkin2013variational}.

\begin{table} [t]
		\scriptsize
	\centering
	\begin{tabular}{l c c c c c c }
		\hline\hline
		Problems & id & $M$ & $N$ & $\rho_l(\textbf{y})$ & $\rho_h(\textbf{y})$ & $\rho_s(\textbf{y})$ \\ \hline	
		cos-spike & $3$ & $1024$ & $2048$ & $102.2423$ & $2378.8$ & $25.09$ \\ 	
gauss-en & $11$ & $256$ & $1024$ & $99.9055$ & $1272.5$ & $8.957$\\ 
		jitter & $902$ & $200$ & $1000$ & $0.4476$ & $4.6881$ & $0.0901$\\ 		
		\hline
	\end{tabular}
	\caption{$N,M$, $\rho(\textbf{y})$ values for the problem setups.} 
	\label{tab:l1 simulations n-m}
\end{table}

\vspace{-10pt}
\section{Conclusion}
In this note, we developed a new approach using bracketing \textit{Regula Falsi}-type methods, that enable level-set methods to be applied to sparse optimization problems with nonconvex likelihood constraints, significantly expanding their usability. These methods achieve comparable performance to Newton's method for root finding on convex problems, and are guaranteed to converge in the nonconvex case, where Newton and secant variants may fail.  

%
%


\begin{figure*} 
	\centering
	\begin{subfigure}[b]{0.4925\textwidth}
		\includegraphics[width=9.2cm,height=4.5cm]{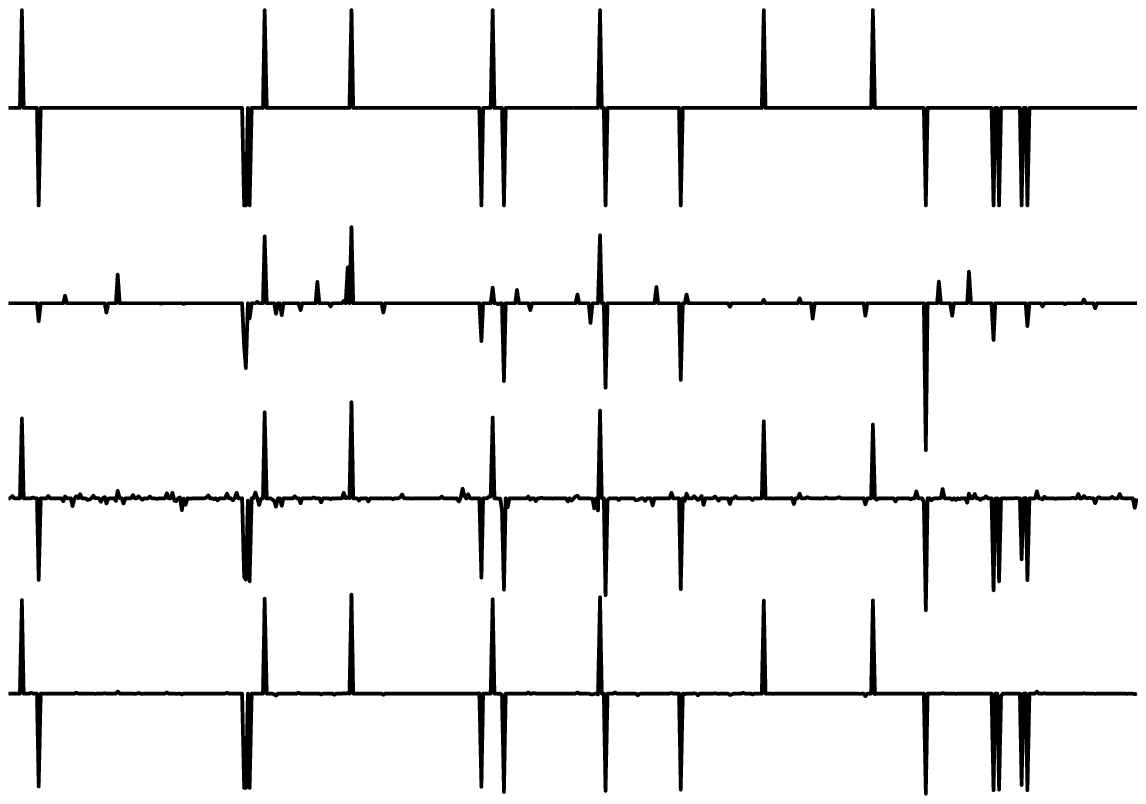}
		\label{fig:aa}
	\end{subfigure}
\begin{subfigure}[b]{0.4925\textwidth}
		\includegraphics
		[width=9.2cm,height=4.5cm]{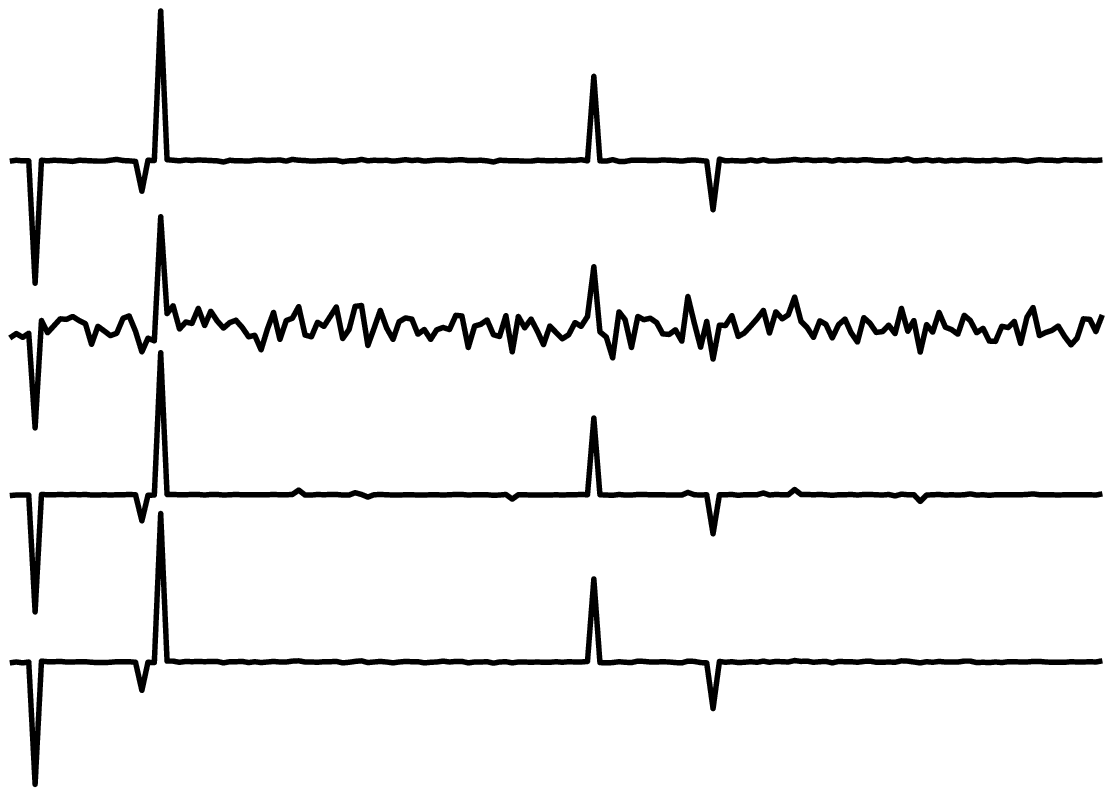}
		\label{fig:bb}
	\end{subfigure} 
	\vspace{-25pt}
	\caption{Left, top to bottom: true signal, reconstructions with \textit{least squares}, \textit{Huber} and \textit{Student's t} losses.
	Right, top to bottom: true errors, \textit{least squares}, \textit{Huber} and \textit{Student's t} residuals.
}
	\label{fig:cs}
\end{figure*}

\vspace{-80pt}

\begin{table*} 
		\scriptsize
	\centering
	\caption{Simulation Results for Solving $(\mbox{P}_{\sigma})$.} 
	\begin{tabular}{l c c | c c c c| c c c c | c c c c | }
		\hline\hline
		& & & \multicolumn{4}{c|}{\textit{least squares}}  &\multicolumn{4}{c|}{\textit{Huber ($\delta = 5\times 10^{-3}$)}} &   \multicolumn{4}{c|}{\textit{Student's t ($\nu = 10^{-2}$)}}  \\ \cline{4-7}  \cline{8-11} \cline{12-15}
		Problems & $\sigma / \rho(\mathbf{y})$ & Methods & $\rho_l(\textbf{r}_{\sigma})$ &$\left\|\textbf{x}_{\sigma}\right\|_1$ & $nnz$ & $iter$ &  $\rho_h(\textbf{r}_{\sigma})$ &$\left\|\textbf{x}_{\sigma}\right\|_1$ & $nnz$ & $iter$  & $\rho_s(\textbf{r}_{\sigma})$ &$\left\|\textbf{x}_{\sigma}\right\|_1$ & $nnz$ & $iter$ \\ \hline 

		\multicolumn{1}{l}{\multirow{15}{*}{cos-spike}} & \multicolumn{1}{c}{\multirow{5}{*}{0.5}} & Regula Falsi &$51.12$ & $66.24$ & $2$ & $10$ & $1189.4$  & $68.328$ & $2$ & $8$ & $12.545$ & $94.289$ & $10$ & $14$  \\	
		& & Illinois & $51.12$ & $66.24$ & $2$ & $7$ & $1189.4$ & $68.328$ & $2$ & $7$ & $12.545$ & $94.289$ & $10$ & $7$ \\
		& & Pegasus & $51.12$ & $66.24$ & $2$ & $7$ & $1189.4$ & $68.328$ & $3$ & $6$ & $12.545$ & $94.289$ & $10$ & $6$ \\
		& & And.-Björck  & $51.12$ & $66.24$ & $2$  & $9$ & $1189.4$ & $68.328$ & $3$ & $9$ & $12.545$ & $94.289$ & $10$ & $10$ \\
		& & Newton  & $51.12$ & $66.24$ & $2$ & $3$ & $1189.4$ & $68.328$ & $2$  & $3$ & $-$ & $-$ & $-$ &  $-$ \\ \cline{2-15}
		& \multicolumn{1}{c}{\multirow{5}{*}{0.05}} & Regula Falsi & $5.112$ & $188.4$ & $75$ & $85$ & $118.94$ & $134.71$ & $2$ & $68$ & $1.2105$ & $171.09$ & $79$ & $72$ \\		
		& & Illinois & $5.112$ & $188.4$ & $75$ & $13$ & $118.94$ & $134.71$ & $2$ & $15$ & $1.2214$ & $172.65$ & $78$ & $16$ \\
		& & Pegasus & $5.112$ & $188.4$ & $75$ & $12$ & $118.94$ & $134.71$ & $2$ & $14$ & $1.259$ & $170.95$ & $76$ & $34$ \\
		& & And.-Björck  & $5.112$ & $188.4$ & $75$ & $27$ & $118.94$ &$134.71$  & $2$ & $32$ & $1.251$ & $170.85$ & $75$ & $50$ \\
		& & Newton  & $5.112$ & $188.4$ & $75$  & $5$ & $118.94$ & $134.7$ & $2$   & $4$ & $-$ &$-$ &  $-$& $-$\\ \cline{2-15}
		& \multicolumn{1}{c}{\multirow{5}{*}{0.005}} & Regula Falsi & $0.511$ & $235.6$ & $123$ & $396$ & $11.893$ & $229.02$ & $127$ & $451$ & $0.1272$ & $231.15$ & $124$ & $383$ \\	
		& & Illinois & $0.511$ & $235.6$ & $123$ & $19$ & $11.894$  & $229.02$ & $125$ & $15$ & $0.1183$ & $231.15$ & $124$ & $30$ \\
		& & Pegasus & $0.511$ & $235.6$ & $123$ & $19$ &   $11.894$ & $229.02$ & $130$ & $14$ & $0.118$ & $231.16$ & $121$ & $27$ \\
		& & And.-Björck  & $0.511$ & $235.6$ & $123$ & $34$ & $11.894$ & $229.02$ & $130$ & $21$ & $0.1273$ & $231.13$ & $122$ & $19$ \\ 
		& & Newton  & $0.511$ & $235.6$ & $123$ & $5$ & $11.894$ & $229.02$ & $125$ & $3$ & $-$ &$-$ &$-$  &$-$ \\ \hline 

		\multicolumn{1}{l}{\multirow{15}{*}{gauss-en}} & \multicolumn{1}{c}{\multirow{5}{*}{0.5}} & Regula Falsi & $49.95$ & $11.9$ & $16$ & $9$ & $636.27$  & $11.97$ & $19$ & $9$ & $4.479$ & $16.6$ & $139$ & $11$  \\	
& & Illinois & $49.95$  & $11.9$ & $16$ & $7$ & $636.27$ & $11.97$  & $19$ & $6$ & $4.479$ & $16.6$ & $139$ & $8$ \\
& & Pegasus & $49.95$  & $11.9$ & $16$ & $5$ & $636.27$ & $11.97$ & $19$  & $5$ & $4.479$ & $16.6$ & $139$ & $6$ \\
& & And.-Björck  & $49.95$  & $11.9$ & $16$  & $7$ & $636.27$ & $11.97$  & $19$ & $7$ & $4.479$ & $16.6$ & $139$ & $8$ \\
		& & Newton  & $49.95$ & $11.9$ &  $16$  & $4$  & $636.27$ & $11.97$ & $19$ & $4$ & $-$ & $-$ & $-$ & $-$ \\ \cline{2-15}
& \multicolumn{1}{c}{\multirow{5}{*}{0.05}} & Regula Falsi & $4.995$ & $26.45$ & $35$ & $26$ & $63.63$  & $26.406$ & $50$ & $27$ & $0.448$ & $27.82$ & $39$ & $24$ \\		
& & Illinois & $4.995$ & $26.45$ & $35$ & $11$ &  $63.63$ & $26.406$ & $50$ & $14$ & $0.448$ & $27.82$ & $39$ & $15$ \\
& & Pegasus & $4.995$ & $26.45$ & $35$ & $11$ & $63.63$ &  $26.406$ & $50$ & $11$ & $0.448$ & $27.82$ & $39$ & $15$ \\
& & And.-Björck  & $4.995$ & $26.45$ & $35$ & $15$ & $63.63$ & $26.406$ & $50$ & $27$ & $0.448$ & $27.82$ & $39$ & $28$ \\
		& & Newton  & $4.995$  & $26.45$ &  $35$  & $4$ & $63.63$ & $26.406$ & $50$ & $4$ & $-$&$-$ & $-$ &$-$ \\ \cline{2-15}
& \multicolumn{1}{c}{\multirow{5}{*}{0.005}} & Regula Falsi & $0.499$ & $28.06$ & $35$ & $208$ & $6.3624$ & $28.036$ & $46$ & $209$ & $0.045$ & $28.12$ & $1024$ & $207$ \\	
& & Illinois & $0.499$ & $28.06$ & $35$ & $19$ & $6.3627$ & $28.036$ & $46$ & $19$ & $0.045$ & $28.12$ & $1024$ & $27$ \\
& & Pegasus & $0.499$ & $28.06$ & $35$ & $19$ & $6.3627$ & $28.036$ & $46$ & $20$ & $0.045$ & $28.12$ & $1024$ & $24$ \\
& & And.-Björck  & $0.499$ & $28.06$ & $35$ & $58$ & $6.3627$ & $28.036$ & $46$ & $62$ & $0.045$ & $28.12$ & $1024$ & $65$ \\ 
		& & Newton  & $0.499$ & $28.06$ &  $35$ & $3$ & $6.3628$ & $28.036$ & $46$ & $4$ & $-$&$-$ & $-$ &$-$ \\ \hline

		\multicolumn{1}{l}{\multirow{15}{*}{jitter}} & \multicolumn{1}{c}{\multirow{5}{*}{0.5}} & Regula Falsi & $0.224$ & $0.766$ & $3$ & $7$ & $2.344$ & $0.6959$ & $3$ & $7$ & $0.045$ & $0.4548$  & $2$ & $9$  \\	
& & Illinois & $0.224$ & $0.766$  & $3$ & $6$ & $2.344$ & $0.6958$ & $3$ & $7$ & $0.045$ & $0.4548$  & $2$ & $7$ \\
& & Pegasus & $0.224$ & $0.766$  & $3$ & $6$ & $2.344$ & $0.6958$ & $3$ & $6$ & $0.045$ & $0.4548$ & $2$ &  $6$ \\
& & And.-Björck  & $0.224$ & $0.766$  & $3$  & $8$ & $2.344$ & $0.6958$ & $3$ & $8$ & $0.045$ & $0.4548$ & $2$ & $9$ \\
		& & Newton  & $0.224$ & $0.766$ & $3$  & $3$ & $2.344$ & $0.6958$ & $3$ & $3$ & $-$ & $-$& $-$ &$-$ \\ \cline{2-15}
& \multicolumn{1}{c}{\multirow{5}{*}{0.05}} & Regula Falsi & $0.022$ & $1.537$ & $3$ & $39$ & $0.2343$ & $1.4338$ & $3$ & $43$ & $0.005$ & $1.2587$  & $3$ & $54$ \\		
& & Illinois & $0.022$  & $1.537$ & $3$ & $12$ & $0.2344$ & $1.4338$ & $3$ & $15$ & $0.005$ & $1.2585$ & $3$ & $14$ \\
& & Pegasus & $0.022$  & $1.537$ & $3$ & $12$ & $0.2344$ & $1.4338$ & $3$ & $14$ & $0.005$ & $1.2585$ & $3$ & $12$ \\
& & And.-Björck  & $0.022$  & $1.537$ & $3$ & $30$ & $0.2344$ & $1.4338$ & $3$ & $39$ & $0.005$ & $1.2585$ & $3$ & $41$ \\
		& & Newton  & $0.022$ & $1.537$ & $3$  & $2$  & $0.2344$ & $1.4338$ & $3$ & $4$ & $-$& $-$& $-$ & $-$\\ \cline{2-15}
& \multicolumn{1}{c}{\multirow{5}{*}{0.005}} & Regula Falsi & $0.002$ & $1.614$ & $3$ & $375$ & $0.0233$ & $1.5646$ & $3$ & $391$ & $0.0005$ & $1.5086$ & $3$ & $413$ \\	
& & Illinois & $0.002$ & $1.614$ & $3$ & $19$ & $0.0234$ & $1.5644$ & $3$ & $16$ & $0.0005$  & $1.508$ & $3$ & $16$ \\
& & Pegasus & $0.002$ & $1.614$ & $3$ & $20$ & $0.0234$ & $1.5644$ & $3$ & $14$  & $0.0005$  & $1.508$ & $3$ & $16$ \\
& & And.-Björck  & $0.002$ & $1.614$ & $3$ & $35$ & $0.0234$ & $1.5644$ & $3$ & $27$ & $0.0005$  & $1.508$ & $3$ & $107$ \\ 
		& & Newton  & $0.002$ & $1.614$ & $3$  & $2$  & $0.0235$ & $1.5643$ & $3$ & $5$ & $-$ & $-$&$-$  &$-$ \\\hline

	\end{tabular}
	\label{tab:l1 simulations}
\end{table*}

\clearpage
\balance
\bibliographystyle{IEEEtran}
\bibliography{IEEEabrv,references}

%
%

\end{document}